\newtheorem{theorem}{Theorem}[section]
\newtheorem{lemma}[theorem]{Lemma}
\newtheorem{corollary}[theorem]{Corollary}
\theoremstyle{definition}
\theoremstyle{remark}
\newlength\longest
\newcommand{\R}{{\mathbb R}}
\begin{document}

\title[Generalized H\"{o}lder's Inequality in Orlicz Spaces]{Generalized H\"{o}lder's Inequality in Orlicz Spaces}

\author{Ifronika${}^{1}$}
\address{${}^{1}$Analysis and Geometry Group, Faculty of Mathematics and Natural Sciences, Bandung Institute of Technology, Jl. Ganesha 10, Bandung 40132, Indonesia}
\email{ifronika@itb.ac.id}

\author{A.A. Masta${}^{2}$}
\address{${}^{2}$Department of Mathematics Education, Universitas Pendidikan Indonesia, Jl. Dr. Setiabudi 229, Bandung 40154, Indonesia.}

\email{alazhari.masta@upi.edu}

\author{M. Nur${}^{3,A}$}
\address{${}^{3}$Analysis and Geometry Group, Faculty of Mathematics and Natural Sciences, Bandung Institute of Technology, Jl. Ganesha 10, Bandung 40132, Indonesia}

\address{${}^{A}$\emph{Permanent Address}: Department of Mathematics, Hasanuddin University, Jl. Perintis Kemerdekaan KM 10, Makassar 90245, Indonesia}

\email{muhammadnur@unhas.ac.id}

\author{H. Gunawan${}^{4}$}
\address{${}^{4}$Analysis and Geometry Group, Faculty of Mathematics and Natural Sciences, Bandung Institute of Technology, Jl. Ganesha 10, Bandung 40132, Indonesia}
\email{hgunawan@itb.ac.id}


\begin{abstract}

Orlicz spaces are generalizations of Lebesgue spaces. The sufficient and necessary conditions for generalized H\"{o}lder's inequality in Lebesgue spaces and in weak Lebesgue spaces are well known.  The aim of this paper is to present sufficient and necessary conditions for generalized
H\"{o}lder's inequality in Orlicz spaces and in weak Orlicz spaces, which are obtained through estimates for characteristic functions of balls in $\R^n$.

\vspace{2mm}

\noindent\textsc{2010 Mathematics Subject Classification.} Primary  26D15; Secondary 46B25, 46E30.

\vspace{2mm}

\noindent\textsc{Keywords and phrases.} Generalized H\"{o}lder's inequality, Orlicz spaces, Weak Orlicz spaces.

\end{abstract}

\thanks{The first and fourth authors are supported by P3MI Program 2018. The second author is supported by Hibah Disertasi Doktor 2018.}


\maketitle


\section {Introduction and Preliminaries}


Orlicz spaces are generalizations of Lebesgue spaces which were introduced by Z.W. Birnbaum and W. Orlicz in 1931
\cite{Orlicz}. Let us first recall the definition of Young function, Orlicz spaces, and weak Orlicz spaces. A function $\Phi:[0,\infty)\to [0,\infty)$ is called \textit{a Young function} if $\Phi$ is
convex, left-continuous, $\Phi(0) = 0$, and $\lim \limits_{t\to\infty}
\Phi(t) = \infty$. 

Let $\Phi$ be a Young function, we define \textit{the Orlicz
space} $L_\Phi(\R^n)$ to be
the set of measurable functions $f : \R^n \rightarrow \mathbb{R}$ such that
$$ \int_{\R^n} \Phi ( a|f(x)|) dx < \infty$$
for some $a > 0$.  The Orlicz space $L_\Phi(\R^n)$ is a Banach space with respect to the norm
$$
\| f \|_{L_\Phi(\R^n)} := \inf \left\{  {b>0:
	\int_{\R^n}\Phi \left(\frac{|f(x)|}{b} \right) dx \leq1}\right\}
$$
(see \cite{Christian, Luxemburg}). Note that, if we take an arbitrary $f \in L_\Phi(\R^n)$, then there exists $b >0$ such that $\int_{\R^n} \Phi \Bigl(\frac{|f(x)|}{b}\Bigr) dx \leq 1$. If $\Phi(t) := t^p$ for some $ 1 \leq p < \infty$ then $L_\Phi(\R^n)=L^p(\mathbb{R}^n)$. Thus, the Orlicz space $L_\Phi(\R^n)$ can be viewed as a generalization of Lebesgue space $L^p(\mathbb{R}^n)$.

On the other hand, for $\Phi$ is a Young function, \textit{the weak Orlicz space} $wL_{\Phi}(\mathbb{R}^n)$ is the set of measurable functions $f : \mathbb{R}^n \rightarrow \mathbb{R} $ such
that
$$
\| f \|_{wL_\Phi(\mathbb{R}^n)} := \inf \left\{  {b>0:
	\mathop {\sup }\limits_{t > 0} \Phi(t) \Bigl| \Bigl\{ x \in \mathbb{R}^n : \frac{|f(x)|}{b} > t \Bigr\}
	\Bigr| \leq1}\right\} < \infty.
$$

\noindent{\tt Remark}. Note that $\| \cdot \|_{wL_\Phi(\mathbb{R}^n)}$ defines a quasi-norm in $wL_{\Phi}(\mathbb{R}^n)$, and that $(wL_{\Phi}(\mathbb{R}^n),\| \cdot \|_{wL_\Phi(\mathbb{R}^n)})$ forms a quasi-Banach space (see \cite{Bekjan, Yong}).

The relation between Orlicz spaces and weak Orlicz spaces is clear, as
presented in the following theorem.
\medskip

\begin{theorem}\label{theorem:1}\cite{Yong}
	Let $\Phi$ be a Young function. Then $L_{\Phi}(\mathbb{R}^n) \subset wL_{\Phi}(\mathbb{R}^n)$
	with $$\| f \|_{wL_\Phi(\mathbb{R}^n)} \leq \| f \|_{L_\Phi(\mathbb{R}^n)},$$for every $f\in L_\Phi(\mathbb{R}^n)$.
\end{theorem}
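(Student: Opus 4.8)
The plan is to establish the inclusion and the norm bound via a Chebyshev-type (layer-cake) estimate. First I would record the elementary fact that every Young function $\Phi$ is nondecreasing on $[0,\infty)$: if $0 \le s < t$, then writing $s$ as the convex combination $s = \tfrac{t-s}{t}\cdot 0 + \tfrac{s}{t}\cdot t$ and using convexity together with $\Phi(0)=0$ and $\Phi \ge 0$ gives $\Phi(s) \le \tfrac{s}{t}\,\Phi(t) \le \Phi(t)$. This monotonicity is exactly what allows passage from a pointwise bound on $|f|$ to a bound on $\Phi(|f|)$.

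Next I would fix $f \in L_\Phi(\mathbb{R}^n)$ and an arbitrary $b > 0$ for which $\int_{\mathbb{R}^n} \Phi\bigl(|f(x)|/b\bigr)\,dx \le 1$ (such $b$ exist by the remark in the Introduction; in fact, since $b \mapsto \Phi(|f(x)|/b)$ is nonincreasing in $b$ for each $x$, the set of such $b$ is an interval unbounded above with infimum $\|f\|_{L_\Phi(\mathbb{R}^n)}$). For $t > 0$ set $E_{b,t} := \{x \in \mathbb{R}^n : |f(x)|/b > t\}$. On $E_{b,t}$ the monotonicity of $\Phi$ gives $\Phi(|f(x)|/b) \ge \Phi(t)$, so
$$
\Phi(t)\,\bigl|E_{b,t}\bigr| = \int_{E_{b,t}} \Phi(t)\,dx \le \int_{E_{b,t}} \Phi\!\left(\frac{|f(x)|}{b}\right) dx \le \int_{\mathbb{R}^n} \Phi\!\left(\frac{|f(x)|}{b}\right) dx \le 1.
$$
Taking the supremum over $t > 0$ yields $\sup_{t>0}\Phi(t)\,\bigl|\{x : |f(x)|/b > t\}\bigr| \le 1$, which by definition of the quasi-norm means $\|f\|_{wL_\Phi(\mathbb{R}^n)} \le b$.

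Finally, since this bound holds for every admissible $b$, taking the infimum over all such $b$ gives $\|f\|_{wL_\Phi(\mathbb{R}^n)} \le \|f\|_{L_\Phi(\mathbb{R}^n)}$, and in particular $f \in wL_\Phi(\mathbb{R}^n)$, so $L_\Phi(\mathbb{R}^n) \subset wL_\Phi(\mathbb{R}^n)$. I do not expect a genuine obstacle here; the only points that merit a word of care are the monotonicity of $\Phi$ (needed because $\Phi$ is only assumed convex and left-continuous, not strictly increasing) and the fact that the defining infimum for $\|\cdot\|_{L_\Phi(\mathbb{R}^n)}$ need not be attained — which is why I run the argument for every admissible $b$ and pass to the infimum at the end, rather than fixing a single optimal $b$.
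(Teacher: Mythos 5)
Your proof is correct. Note that the paper does not actually prove Theorem~\ref{theorem:1}; it is quoted from \cite{Yong} without argument, so there is no in-paper proof to compare against. Your route --- the Chebyshev-type estimate $\Phi(t)\,|\{x: |f(x)|/b>t\}| \le \int_{\mathbb{R}^n}\Phi(|f(x)|/b)\,dx \le 1$ for each admissible $b$, followed by passing to the infimum over such $b$ --- is the standard and complete way to obtain $\|f\|_{wL_\Phi(\mathbb{R}^n)}\le\|f\|_{L_\Phi(\mathbb{R}^n)}$, and you correctly isolate the only two delicate points: the monotonicity of $\Phi$ (settled by your convexity argument with $\Phi(0)=0$) and the possible non-attainment of the defining infimum (settled by quantifying over all admissible $b$ rather than a single optimal one).
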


The study of Orlicz spaces and weak Orlicz spaces were widely investigated during last decades, see \cite{Kufner,Ning,Luxemburg,Lech,Masta1,Oneil}.  In 1965, O'Neil \cite{Oneil} obtained sufficient and necessary conditions
for the H\"{o}lder's inequality in Orlicz spaces, as in the following theorem.\\

\begin{theorem}\label{theorem:1.1}
	Let $\Phi_i$ be Young functions and $ \Phi_{i}^{-1}(s)=\inf \{r \geq 0 :
	\Phi_i (r) > s \}$ for $i=1,2,3$.
	Then the following statements are equivalent:
	
	{\parindent=0cm
		{\rm (1)} There exists a constant $C>0$ such that for all $ t \geq 0$ we have $$\Phi_{1}^{-1}(t)\Phi_{2}^{-1}(t) \leq C \Phi_{3}^{-1}(t).$$  
		
		{\rm (2)} There exists a constant $C>0$ such that for all $s, t \ge 0$, $$ \Phi_{3} \Bigl(\frac{st}{C}\Bigr) \leq \Phi_1(s) +\Phi_2(t).$$

		{\rm (3)} There exists a constant $M>0$ such that $$\| f g \|_{L_{\Phi_3}(\R^n)} \leq M\| f\|_{L_{\Phi_1}(\R^n)}
		\| g\|_{L_{\Phi_2}(\R^n)}$$  for every $f \in L_{\Phi_1}(\R^n)$
		and $g \in L_{\Phi_2}(\R^n)$.
		
		{\rm (4)} For every $f \in L_{\Phi_1}(\R^n)$
		and $g \in L_{\Phi_2}(\R^n)$, then $fg \in L_{\Phi_3}(\R^n)$. 
		\par}
\end{theorem}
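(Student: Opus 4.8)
The plan is to prove the cycle $(1)\Rightarrow(2)\Rightarrow(3)\Rightarrow(4)\Rightarrow(1)$, together with $(2)\Rightarrow(1)$. Two ingredients will do most of the work: a Young-type pointwise inequality relating $(1)$ and $(2)$, and the explicit value of the Luxemburg norm of the characteristic function of a ball. I would use freely the following properties of a Young function $\Phi$ and its generalized inverse: $\Phi$ and $\Phi^{-1}$ are nondecreasing; $\Phi(\Phi^{-1}(s))\le s$, and $\Phi(u)>s$ whenever $u>\Phi^{-1}(s)$ (both from left-continuity); and, since $\Phi$ is convex with $\Phi(0)=0$, $\Phi(\lambda u)\le\lambda\Phi(u)$ for $0\le\lambda\le1$ and $\Phi(2u)\ge2\Phi(u)$, from which $\Phi^{-1}(2t)\le2\Phi^{-1}(t)$ and $\Phi^{-1}(\lambda t)\ge\lambda\Phi^{-1}(t)$ for $0\le\lambda\le1$. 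Finally, a short computation from the definition of the norm shows that for every ball $B\subset\R^n$,
$$\|\chi_B\|_{L_\Phi(\R^n)}=\bigl(\Phi^{-1}(1/|B|)\bigr)^{-1}.$$

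For $(1)\Rightarrow(2)$: fix $s,t\ge0$, set $a:=\Phi_1(s)+\Phi_2(t)$, note $s\le\Phi_1^{-1}(\Phi_1(s))\le\Phi_1^{-1}(a)$ and $t\le\Phi_2^{-1}(a)$, multiply, and apply $(1)$ to get $st\le C\,\Phi_3^{-1}(a)$, hence $\Phi_3(st/C)\le\Phi_3(\Phi_3^{-1}(a))\le a$. For $(2)\Rightarrow(1)$: fix $t\ge0$ and apply $(2)$ with $\Phi_1^{-1}(t)$ and $\Phi_2^{-1}(t)$ in place of $s$ and $t$; since $\Phi_i(\Phi_i^{-1}(t))\le t$ this gives $\Phi_3\bigl(\Phi_1^{-1}(t)\Phi_2^{-1}(t)/C\bigr)\le2t$, whence $\Phi_1^{-1}(t)\Phi_2^{-1}(t)\le C\,\Phi_3^{-1}(2t)\le2C\,\Phi_3^{-1}(t)$, i.e.\ $(1)$ with $2C$.

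For $(2)\Rightarrow(3)$: given nonzero $f\in L_{\Phi_1}(\R^n)$ and $g\in L_{\Phi_2}(\R^n)$, pick $b_1>\|f\|_{L_{\Phi_1}(\R^n)}$ and $b_2>\|g\|_{L_{\Phi_2}(\R^n)}$ with $\int_{\R^n}\Phi_1(|f|/b_1)\le1$ and $\int_{\R^n}\Phi_2(|g|/b_2)\le1$; applying $(2)$ pointwise with $s=|f(x)|/b_1$, $t=|g(x)|/b_2$ and integrating yields $\int_{\R^n}\Phi_3\bigl(|fg|/(Cb_1b_2)\bigr)\le2$, and then $\Phi_3(u/2)\le\tfrac12\Phi_3(u)$ gives $\int_{\R^n}\Phi_3\bigl(|fg|/(2Cb_1b_2)\bigr)\le1$, so $\|fg\|_{L_{\Phi_3}(\R^n)}\le2Cb_1b_2$; taking infima over such $b_1,b_2$ gives $(3)$ with $M=2C$. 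The implication $(3)\Rightarrow(4)$ is immediate.

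The crux is $(4)\Rightarrow(1)$, and here the ball estimate is essential; I would argue by contraposition. Assuming $(1)$ fails, choose for each $k\in\N$ some $t_k>0$ with $\Phi_1^{-1}(t_k)\Phi_2^{-1}(t_k)>2^{3k}\Phi_3^{-1}(t_k)$ (the degenerate possibility that failure occurs only at $t=0$, i.e.\ $\Phi_3^{-1}(0)=0<\Phi_1^{-1}(0)\Phi_2^{-1}(0)$, is handled separately by taking constant functions equal to $\Phi_1^{-1}(0)$ and $\Phi_2^{-1}(0)$). Pick pairwise disjoint balls $B_k\subset\R^n$ with $|B_k|=1/t_k$ and set
$$f:=\sum_{k\ge1}\Phi_1^{-1}(2^{-k}t_k)\,\chi_{B_k},\qquad g:=\sum_{k\ge1}\Phi_2^{-1}(2^{-k}t_k)\,\chi_{B_k}.$$
Then $|B_k|\,\Phi_1\bigl(\Phi_1^{-1}(2^{-k}t_k)\bigr)\le|B_k|\,2^{-k}t_k=2^{-k}$ forces $\int_{\R^n}\Phi_1(|f|)\le1$, so $f\in L_{\Phi_1}(\R^n)$, and likewise $g\in L_{\Phi_2}(\R^n)$; while on $B_k$ one has $fg=\Phi_1^{-1}(2^{-k}t_k)\Phi_2^{-1}(2^{-k}t_k)\ge2^{-2k}\Phi_1^{-1}(t_k)\Phi_2^{-1}(t_k)>2^k\Phi_3^{-1}(t_k)$, so for any $\lambda>0$ and all large $k$ one gets $\lambda\,(fg)|_{B_k}>\Phi_3^{-1}(t_k)$ and hence $\int_{B_k}\Phi_3(\lambda|fg|)\ge|B_k|\,t_k=1$; therefore $\int_{\R^n}\Phi_3(\lambda|fg|)=\infty$ for every $\lambda>0$, i.e.\ $fg\notin L_{\Phi_3}(\R^n)$, contradicting $(4)$. (An alternative route deduces $(4)\Rightarrow(3)$ from the closed graph theorem, using that $(f,g)\mapsto fg$ is a separately continuous --- hence jointly continuous --- bilinear map on the Banach spaces involved, and then $(3)\Rightarrow(1)$ by testing on $f=g=\chi_B$ and invoking the displayed norm formula.) I expect the main work to lie exactly in this last step: calibrating the scalars $2^{-k}t_k$ so that $f,g$ sit in their spaces while $fg$ escapes $L_{\Phi_3}(\R^n)$, verifying the inequalities for the generalized inverses, and dealing with the degenerate values of $\Phi_i^{-1}$.
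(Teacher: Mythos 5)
Your proposal is correct, and on the decisive implication it takes a genuinely different (and in fact more careful) route than the paper. Note first that the paper does not prove Theorem \ref{theorem:1.1} in isolation: it is the case $m=2$ of Theorem \ref{theorem:2.1}, whose proof runs $(1)\Rightarrow(2)\Rightarrow(3)\Rightarrow(1)$ together with $(3)\Leftrightarrow(4)$. Your $(1)\Rightarrow(2)$ and $(2)\Rightarrow(3)$ coincide with the paper's (same use of $s\le\Phi_i^{-1}(\Phi_i(s))$ and $\Phi(\Phi^{-1}(a))\le a$; same normalization by the Luxemburg norms followed by convexity to absorb the factor $2$, though you approach the norm from above by $b_1,b_2$ rather than dividing by $\Vert f\Vert$ directly, which cleanly handles the zero-norm case). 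Your direct $(2)\Rightarrow(1)$ via $\Phi_3^{-1}(2t)\le 2\Phi_3^{-1}(t)$ is an extra that the paper gets for free from its cycle. The real divergence is at $(4)$: the paper argues $(4)\Rightarrow(3)$ by setting $M:=\alpha/\bigl(\Vert f\Vert_{L_{\Phi_1}}\Vert g\Vert_{L_{\Phi_2}}\bigr)$, a constant that depends on $f$ and $g$, so as written it does not yield the uniform constant that statement $(3)$ requires; it then closes the loop with $(3)\Rightarrow(1)$ by testing on $\chi_{B_0}$ and Lemma \ref{lemma:2.4} (the route you relegate to a parenthesis). Your contrapositive construction for $(4)\Rightarrow(1)$ --- disjoint balls $B_k$ with $|B_k|=1/t_k$, coefficients $\Phi_i^{-1}(2^{-k}t_k)$, the inequality $\Phi^{-1}(\lambda t)\ge\lambda\Phi^{-1}(t)$ to bound $fg$ from below, and the separate treatment of the degenerate case at $t=0$ --- is sound and supplies exactly the uniformity argument the paper's $(4)\Rightarrow(3)$ step is missing; the closed-graph alternative you mention would serve the same purpose. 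What the paper's approach buys is brevity and a single reusable lemma (the norm of $\chi_B$); what yours buys is a genuinely complete chain of implications.
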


\medskip

In 2016, Masta \textit{et al.} \cite{Masta2} obtained sufficient and necessary conditions
for the generalized H\"{o}lder's inequality in Lebesgue spaces. Related result about sufficient and necessary conditions
for the generalized H\"{o}lder's inequality can be found in \cite{Ifro}. 


Motivated by these results, the purpose of this study is to get the sufficient and necessary conditions for the generalized H\"{o}lder's inequality in Orlicz spaces and extend the results to weak Orlicz spaces.

The rest of this paper is organized as follows. The main results are
presented in Sections 2. In Section 2, we state the sufficient and necessary conditions for generalized H\"{o}lder's inequality in Orlicz spaces as Theorem
\ref{theorem:2.1}. An analogous result for the weak
Orlicz spaces is stated as Theorem \ref{theorem:3.1}.

\bigskip

To prove our results, we pay attention to the characteristic functions
of balls in $\mathbb{R}^n$ and the following lemmas.

\medskip
\begin{lemma}\label{lemma:2.2} \cite{Christian}
	Let $\Phi$ be a Young function and $f \in L_{\Phi}(\mathbb{R}^n)$. If $0 <
	\| f \|_{L_\Phi(\mathbb{R}^n)} <\infty$, then $\int_{\mathbb{R}^n}\Phi\left(
	\frac{|f(x)|}{\| f \|_{L_\Phi(\mathbb{R}^n)}} \right)  dx \leq 1 $. Furthermore,
	$\| f \|_{L_\Phi(\mathbb{R}^n)} \leq 1 $ if only if $\int_{\mathbb{R}^n}\Phi(|f(x)|) dx
	\leq 1.$
\end{lemma}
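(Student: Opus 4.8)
The plan is to argue directly from the definition of the Luxemburg norm. Write $b_0 := \| f \|_{L_\Phi(\R^n)}$ and consider the set
$$S := \Bigl\{ b > 0 : \int_{\R^n} \Phi\!\left(\tfrac{|f(x)|}{b}\right) dx \le 1 \Bigr\},$$
so that $b_0 = \inf S$. First I would record the elementary monotonicity facts about $\Phi$: being convex on $[0,\infty)$ with $\Phi(0)=0$, it is nondecreasing, and (being finite-valued) it is continuous on $(0,\infty)$, with the left-continuity hypothesis supplying continuity from the left at every point. From monotonicity of $\Phi$ it follows that $S$ is an ``upper'' set: if $b \in S$ and $b' > b$, then $|f(x)|/b' \le |f(x)|/b$ pointwise, hence $\Phi(|f(x)|/b') \le \Phi(|f(x)|/b)$ and $\int_{\R^n}\Phi(|f|/b')\,dx \le 1$, so $b' \in S$.

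For the first assertion, assume $0 < b_0 < \infty$ and pick a sequence $b_k \downarrow b_0$ with $b_k \in S$ for every $k$. Then $|f(x)|/b_k$ increases to $|f(x)|/b_0$ for each $x$, and by monotonicity together with left-continuity of $\Phi$, the functions $\Phi(|f(x)|/b_k)$ increase pointwise to $\Phi(|f(x)|/b_0)$. The Monotone Convergence Theorem then yields
$$\int_{\R^n} \Phi\!\left(\frac{|f(x)|}{b_0}\right) dx = \lim_{k\to\infty} \int_{\R^n} \Phi\!\left(\frac{|f(x)|}{b_k}\right) dx \le 1,$$
which is exactly the claimed inequality (equivalently, $b_0 \in S$). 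I expect this interchange of limit and integral to be the only delicate point, since there is no obvious integrable majorant for a dominated-convergence argument; the monotone left-continuity of $\Phi$ is precisely what makes the monotone convergence theorem the right tool.

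For the second assertion, the implication ``$\int_{\R^n}\Phi(|f(x)|)\,dx \le 1 \Rightarrow \| f \|_{L_\Phi(\R^n)} \le 1$'' is immediate, since then $b = 1 \in S$, so $\inf S \le 1$. Conversely, suppose $\| f \|_{L_\Phi(\R^n)} \le 1$. If $b_0 = 0$, choose $b_k \downarrow 0$ with $b_k \in S$; on the set $\{|f| > 0\}$ we have $|f(x)|/b_k \to \infty$, hence $\Phi(|f(x)|/b_k) \to \infty$ by $\lim_{t\to\infty}\Phi(t) = \infty$, and Fatou's lemma forces $|\{|f| > 0\}| = 0$, so $f = 0$ a.e. and $\int_{\R^n}\Phi(|f(x)|)\,dx = 0 \le 1$. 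If instead $0 < b_0 \le 1$, the first assertion gives $\int_{\R^n}\Phi(|f(x)|/b_0)\,dx \le 1$, and since $b_0 \le 1$ implies $|f(x)| \le |f(x)|/b_0$ and $\Phi$ is nondecreasing, we conclude $\int_{\R^n}\Phi(|f(x)|)\,dx \le \int_{\R^n}\Phi(|f(x)|/b_0)\,dx \le 1$. This is the standard route (as in \cite{Christian}) and completes the proof.
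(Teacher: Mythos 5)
Your proof is correct. Note that the paper does not actually prove this lemma --- it is quoted from the cited reference \cite{Christian} without argument --- so there is no in-paper proof to compare against; your argument is the standard one for the Luxemburg norm. The two points that are often glossed over are handled properly here: the monotone convergence step as $b_k \downarrow b_0$ (where left-continuity, or just finiteness and convexity, of $\Phi$ guarantees $\Phi(|f(x)|/b_k) \uparrow \Phi(|f(x)|/b_0)$), and the degenerate case $\| f \|_{L_\Phi(\R^n)} = 0$ in the second assertion, where your Fatou argument correctly forces $f = 0$ a.e.
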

\bigskip

\begin{lemma}\label{lemma:1.1}\cite{Masta1}
	Let $\Phi$ be a Young function. If $ \Phi^{-1}(s):=\inf \{r \geq 0 :
	\Phi (r) > s \}$, then we have the following properties:
	
	{\parindent=0cm
		{\rm (1)} $\Phi^{-1}(0) = 0$.
		
		{\rm (2)} $ \Phi^{-1}(s_1) \leq \Phi^{-1}(s_2)$ for  $s_1 \leq s_2$.
		
		{\rm (3)} $\Phi (\Phi^{-1}(s)) \leq s \leq \Phi^{-1}(\Phi(s))$ for $0 \leq s <
		\infty$.
		\par}
\end{lemma}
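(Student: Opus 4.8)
\emph{Proof proposal.} The plan is to derive all three statements directly from the definition $\Phi^{-1}(s)=\inf\{r\ge 0:\Phi(r)>s\}$, using only two elementary features of a Young function: it is nondecreasing (convexity together with $\Phi(0)=0$ gives $\Phi(\lambda t)\le\lambda\Phi(t)\le\Phi(t)$ for $0\le\lambda\le 1$, hence monotonicity), and it is left-continuous. The single structural fact I would isolate first is that, because $\Phi$ is nondecreasing, each superlevel set $E_s:=\{r\ge 0:\Phi(r)>s\}$ is upward closed, and it is nonempty and unbounded above since $\Phi(t)\to\infty$; hence $\Phi^{-1}(s)=\inf E_s$ is finite and $E_s$ is either $(\Phi^{-1}(s),\infty)$ or $[\Phi^{-1}(s),\infty)$. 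With this observation, (1) and (2) become bookkeeping: for (2), $s_1\le s_2$ gives $E_{s_2}\subseteq E_{s_1}$, and the infimum of the larger set cannot exceed that of the smaller one, so $\Phi^{-1}(s_1)\le\Phi^{-1}(s_2)$; for (1), $\Phi^{-1}(0)=\inf\{r\ge 0:\Phi(r)>0\}$, and if $\Phi$ is positive on $(0,\infty)$ this infimum is $0$.

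For (3) I would treat the two inequalities separately. For $\Phi(\Phi^{-1}(s))\le s$: by definition of the infimum, every $r<\Phi^{-1}(s)$ lies outside $E_s$, i.e.\ $\Phi(r)\le s$; letting $r\uparrow\Phi^{-1}(s)$ and invoking left-continuity of $\Phi$ yields $\Phi(\Phi^{-1}(s))\le s$ (the case $\Phi^{-1}(s)=0$ being trivial since $\Phi(0)=0$). For $s\le\Phi^{-1}(\Phi(s))$: if $r$ satisfies $\Phi(r)>\Phi(s)$, then monotonicity forces $r\ge s$ (otherwise $\Phi(r)\le\Phi(s)$, a contradiction), so $E_{\Phi(s)}\subseteq[s,\infty)$ and therefore $\Phi^{-1}(\Phi(s))=\inf E_{\Phi(s)}\ge s$. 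Combining the two gives property (3).

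I do not expect a genuine obstacle. The only points that need care are (a) verifying that the superlevel sets are honest intervals, so that ``$r$ below the infimum implies $\Phi(r)\le s$'' is legitimate, and (b) invoking left-continuity at the right endpoint in (3) — this is exactly where the left-continuity hypothesis in the definition of a Young function is used, and without it the first inequality in (3) can fail. A secondary subtlety is the degenerate case in (1): if $\Phi$ is allowed to vanish on a nontrivial interval $[0,a]$, then $\Phi^{-1}(0)=a$, so (1) implicitly requires $\Phi$ to be strictly positive on $(0,\infty)$; I would either state this assumption explicitly or remark that it is harmless for the Hölder-type applications that follow.
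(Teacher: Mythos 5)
Your proposal is correct, and in fact the paper offers no proof of this lemma at all --- it is quoted from the reference [Masta1] --- so there is nothing to diverge from; your argument is the standard one that any source would give. The verification of (2) via reverse inclusion of the superlevel sets $E_s$, the left-continuity argument for $\Phi(\Phi^{-1}(s))\le s$, and the monotonicity argument for $s\le\Phi^{-1}(\Phi(s))$ are all sound, and you correctly identify that left-continuity is exactly the hypothesis that makes the first inequality in (3) work.

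Your caveat about (1) is a genuine and worthwhile observation rather than a flaw in your proof: under the paper's own definition of a Young function (convex, left-continuous, $\Phi(0)=0$, $\Phi(t)\to\infty$), a function such as $\Phi(t)=\max(0,t-1)$ is admissible, and for it $\Phi^{-1}(0)=1\ne 0$. So property (1) as stated requires the additional (and common, but here unstated) assumption that $\Phi$ is positive on $(0,\infty)$. Since the equivalences in Theorems \ref{theorem:2.1} and \ref{theorem:3.1} only invoke parts (2) and (3) of the lemma, this gap in (1) does not propagate into the main results, but your suggestion to state the positivity hypothesis explicitly is the right fix.
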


\bigskip

\begin{lemma}\label{lemma:2.4}\cite{Yong,Masta1} Let $\Phi$ be a Young function,
	$a\in\mathbb{R}^n$, and $r>0$. Then
	$$ \| \chi_{B(a,r)} \|_{L_\Phi(\mathbb{R}^n)}= \frac{1}{\Phi^{-1}(\frac{1}
		{|B(a,r)|})},$$ where $|B(a,r)|$ denotes the volume of open ball $B(a,r)$.
\end{lemma}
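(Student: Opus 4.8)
The plan is to compute the norm directly from its definition. Since $\chi_{B(a,r)}$ takes only the values $1$ on $B(a,r)$ and $0$ elsewhere, and $\Phi(0)=0$, for any $b>0$ one has
$$\int_{\mathbb{R}^n}\Phi\left(\frac{\chi_{B(a,r)}(x)}{b}\right)dx=\Phi\left(\frac{1}{b}\right)|B(a,r)|.$$
Writing $m:=|B(a,r)|>0$, the definition of the Luxemburg norm therefore reduces to
$$\|\chi_{B(a,r)}\|_{L_\Phi(\mathbb{R}^n)}=\inf\left\{b>0:\Phi\left(\tfrac{1}{b}\right)\le\tfrac{1}{m}\right\},$$
so the whole task becomes showing that this infimum equals $1/\Phi^{-1}(1/m)$. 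Before starting I would record that $0<\Phi^{-1}(1/m)<\infty$: finiteness is immediate from $\lim_{t\to\infty}\Phi(t)=\infty$, while positivity follows because convexity together with $\Phi(0)=0$ forces $\Phi(u)\to 0$ as $u\to 0^+$, so $\Phi(u)\le 1/m$ for all sufficiently small $u>0$; hence $1/\Phi^{-1}(1/m)$ is a well-defined positive real number.

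For the bound $\|\chi_{B(a,r)}\|_{L_\Phi(\mathbb{R}^n)}\le 1/\Phi^{-1}(1/m)$ I would simply check that $b_0:=1/\Phi^{-1}(1/m)$ is admissible in the infimum: by Lemma \ref{lemma:1.1}(3), $\Phi(1/b_0)=\Phi(\Phi^{-1}(1/m))\le 1/m$, so $b_0$ lies in the set over which the infimum is taken. For the reverse bound, take any admissible $b>0$, so $\Phi(1/b)\le 1/m$; applying the second half of Lemma \ref{lemma:1.1}(3) (namely $s\le\Phi^{-1}(\Phi(s))$) with $s=1/b$ and then the monotonicity of $\Phi^{-1}$ from Lemma \ref{lemma:1.1}(2) gives
$$\frac{1}{b}\le\Phi^{-1}\!\left(\Phi\!\left(\frac{1}{b}\right)\right)\le\Phi^{-1}\!\left(\frac{1}{m}\right),$$
hence $b\ge 1/\Phi^{-1}(1/m)$. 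Taking the infimum over all admissible $b$ gives the claimed lower bound, and the two estimates together prove the lemma.

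I do not expect a real obstacle here; the statement is essentially a bookkeeping exercise with the generalized inverse. The only points that need genuine attention are using the two inequalities in Lemma \ref{lemma:1.1}(3) in the correct directions for the two halves of the argument, and the small preliminary verification that $\Phi^{-1}(1/m)$ is a strictly positive finite number so that the reciprocal appearing in the statement makes sense.
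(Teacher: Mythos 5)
The paper itself gives no proof of this lemma --- it is imported from \cite{Yong,Masta1} --- so there is nothing in-text to compare against. Your argument is correct and is the standard one: the integral of $\Phi(\chi_{B(a,r)}/b)$ collapses to $\Phi(1/b)\,|B(a,r)|$ because $\Phi(0)=0$, the admissibility of $b_0=1/\Phi^{-1}(1/m)$ follows from $\Phi(\Phi^{-1}(s))\le s$, and the reverse inequality follows from $s\le\Phi^{-1}(\Phi(s))$ together with the monotonicity of $\Phi^{-1}$, exactly as in Lemma \ref{lemma:1.1}. Your preliminary check that $0<\Phi^{-1}(1/m)<\infty$ (using $\Phi(u)\le u\,\Phi(1)\to 0$ from convexity, and $\Phi(t)\to\infty$) is a worthwhile detail that the cited sources also need and that you handle correctly.
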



\section {Results and Discussion}


The following theorem presents sufficient and necessary conditions for generalized H\"{o}lder's inequality in Orlicz spaces.

\bigskip

\begin{theorem}\label{theorem:2.1} 
	Let $m\geq 2$. If $\Phi$ and $\Phi_i$ are Young functions for $i=1,\dots, m$,
	then the following statements are equivalent:
	
	{\parindent=0cm
		{\rm (1)} There exists a constant $C>0$ such that $$ \prod\limits_{i=1}^m \Phi_{i}^{-1}(t) \leq C \Phi^{-1}(t)$$ for every $ t \geq 0$.  
		
		{\rm (2)} There exists a constant $C>0$ such that for all $t_i \ge 0$, $i = 1,\cdots,m$, $$ \Phi \left( \frac{\prod\limits_{i=1}^m t_i}{C}\right)  \leq \sum_{i=1}^{m} \Phi_i(t_i).$$
		
		{\rm (3)} There exists a constant $M>0$ such that $$\left\Vert \prod\limits_{i=1}^m f_i \right\Vert_{L_{\Phi}(\mathbb{R}^n)}\leq M \prod\limits_{i=1}^m
		\| f_i \|_{L_{\Phi_i}(\mathbb{R}^n)},$$ for every $f_i\in L_{\Phi_i}(\mathbb{R}^n)$,
		$i=1,\dots,m$.
		
		{\rm (4)} For every $f_i \in L_{\Phi_i}(\mathbb{R}^n)$, then $\prod\limits_{i=1}^m f_i \in L_{\Phi}(\mathbb{R}^n)$.
		
		\par}
\end{theorem}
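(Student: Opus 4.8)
The natural approach is to prove the cycle $(1)\Rightarrow(2)\Rightarrow(3)\Rightarrow(4)\Rightarrow(1)$. The ingredients are Lemma~\ref{lemma:1.1} (monotonicity of $\Phi^{-1}$ and the sandwich $\Phi(\Phi^{-1}(s))\le s\le\Phi^{-1}(\Phi(s))$), Lemma~\ref{lemma:2.2} (so that $\|g\|_{L_\Phi(\R^n)}\le b$ as soon as $\int_{\R^n}\Phi(|g|/b)\,dx\le1$, and so that membership in $L_\Phi(\R^n)$ is equivalent to finiteness of the Luxemburg norm), Lemma~\ref{lemma:2.4} for the norms of characteristic functions of balls, and two elementary consequences of the axioms of a Young function: $\Phi$ is nondecreasing, and $\Phi(\lambda t)\le\lambda\Phi(t)$ for $\lambda\in[0,1]$ (from convexity together with $\Phi(0)=0$).

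For $(1)\Rightarrow(2)$, given $t_i\ge0$ I set $s:=\sum_{i=1}^m\Phi_i(t_i)$; by Lemma~\ref{lemma:1.1}(2)--(3) each $t_i\le\Phi_i^{-1}(\Phi_i(t_i))\le\Phi_i^{-1}(s)$, so $\prod_{i=1}^m t_i\le\prod_{i=1}^m\Phi_i^{-1}(s)\le C\,\Phi^{-1}(s)$ by (1), and applying the nondecreasing $\Phi$ together with Lemma~\ref{lemma:1.1}(3) gives $\Phi\bigl(\tfrac1C\prod_{i=1}^m t_i\bigr)\le\Phi(\Phi^{-1}(s))\le s$, which is (2) with the same constant. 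For $(2)\Rightarrow(3)$, if $\|f_i\|_{L_{\Phi_i}(\R^n)}=0$ for some $i$ then $f_i=0$ a.e.\ and there is nothing to prove, so I may assume all these norms lie in $(0,\infty)$; applying (2) pointwise with $t_i=|f_i(x)|/\|f_i\|_{L_{\Phi_i}(\R^n)}$, integrating over $\R^n$, and using Lemma~\ref{lemma:2.2} yields $\int_{\R^n}\Phi\bigl(\prod_{i=1}^m|f_i(x)|\,/\,C\prod_{i=1}^m\|f_i\|_{L_{\Phi_i}(\R^n)}\bigr)\,dx\le m$; replacing the argument of $\Phi$ by $1/m$ of itself and using $\Phi(\cdot/m)\le\Phi(\cdot)/m$ turns this into an upper bound $1$, so $\bigl\|\prod_{i=1}^m f_i\bigr\|_{L_\Phi(\R^n)}\le Cm\prod_{i=1}^m\|f_i\|_{L_{\Phi_i}(\R^n)}$, i.e.\ (3) with $M=Cm$. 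The implication $(3)\Rightarrow(4)$ is immediate.

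The substance of the proof is $(4)\Rightarrow(1)$, which I will argue by contraposition with the help of characteristic functions of balls. If (1) fails then, since $0<\Phi^{-1}(t),\Phi_i^{-1}(t)<\infty$ for every $t>0$ (a consequence of the Young-function axioms), for each $k\in\N$ there is $\tau_k>0$ with $\prod_{i=1}^m\Phi_i^{-1}(\tau_k)\ge k\,2^{k}\,\Phi^{-1}(\tau_k)$. I pick pairwise disjoint balls $B_k\subset\R^n$ with $|B_k|=1/\tau_k$ and define $f_i:=\sum_{k\in\N}2^{-k/m}\Phi_i^{-1}(\tau_k)\,\chi_{B_k}$. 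By Lemma~\ref{lemma:2.4} each summand has $L_{\Phi_i}(\R^n)$-norm equal to $2^{-k/m}$, so the triangle inequality gives $\|f_i\|_{L_{\Phi_i}(\R^n)}\le\sum_k2^{-k/m}<\infty$, hence $f_i\in L_{\Phi_i}(\R^n)$. On the other hand disjointness gives $\prod_{i=1}^m f_i\ge\bigl(\prod_{i=1}^m 2^{-k/m}\Phi_i^{-1}(\tau_k)\bigr)\chi_{B_k}$ for every $k$, so by monotonicity of the Luxemburg norm and Lemma~\ref{lemma:2.4}, $\bigl\|\prod_{i=1}^m f_i\bigr\|_{L_\Phi(\R^n)}\ge 2^{-k}\prod_{i=1}^m\Phi_i^{-1}(\tau_k)/\Phi^{-1}(\tau_k)\ge k$ for every $k$; thus $\prod_{i=1}^m f_i$ has infinite Luxemburg norm and so lies outside $L_\Phi(\R^n)$, contradicting (4).

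I expect the one genuinely delicate point to be the calibration in $(4)\Rightarrow(1)$: the factor $2^{-k/m}$ is exactly what keeps each $f_i$ inside $L_{\Phi_i}(\R^n)$ (its $m$-fold product only loses a factor $2^{-k}$), while the surplus $k\,2^{k}$ harvested from the failure of (1) survives that loss and pushes the product out of $L_\Phi(\R^n)$. The three other implications are routine modulo the cited lemmas and the two convexity facts noted above.
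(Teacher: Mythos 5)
Your proof is correct, but it closes the equivalence by a genuinely different route than the paper. The paper proves $(1)\Rightarrow(2)\Rightarrow(3)$, then $(3)\Leftrightarrow(4)$, and finally $(3)\Rightarrow(1)$ by testing (3) on the characteristic function of a single ball and invoking Lemma~\ref{lemma:2.4}; your first two implications coincide with the paper's (same use of Lemma~\ref{lemma:1.1} for $(1)\Rightarrow(2)$, same convexity trick $\Phi(t/m)\le\Phi(t)/m$ and the same constant $M=mC$ for $(2)\Rightarrow(3)$). Where you diverge is the return leg: you prove $(4)\Rightarrow(1)$ directly by contraposition, choosing $\tau_k>0$ with $\prod_{i=1}^m\Phi_i^{-1}(\tau_k)\ge k2^k\Phi^{-1}(\tau_k)$, disjoint balls of measure $1/\tau_k$, and the gliding-hump functions $f_i=\sum_k2^{-k/m}\Phi_i^{-1}(\tau_k)\chi_{B_k}$. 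This costs more effort than the paper's one-ball test, but it buys something real: the paper's passage from (4) back to (3) sets $M:=\alpha/\prod_i\|f_i\|_{L_{\Phi_i}(\R^n)}$ with $\alpha$ depending on the particular $f_1,\dots,f_m$, so the constant it produces is not uniform and (3) as stated does not actually follow from that step without an additional closed-graph or uniform-boundedness argument; your contrapositive construction sidesteps this entirely and makes the cycle $(1)\Rightarrow(2)\Rightarrow(3)\Rightarrow(4)\Rightarrow(1)$ airtight, at the price of losing the explicit relation between the constants in (3) and (1) that the paper's one-ball computation gives for free. Two small points you should write out when formalizing: justify $0<\Phi^{-1}(t)<\infty$ for $t>0$ (convexity with $\Phi(0)=0$ gives $\Phi(t)\le t\,\Phi(1)\to0$ as $t\to0^+$, and $\lim_{t\to\infty}\Phi(t)=\infty$ makes the defining infimum nonempty), and justify $\|f_i\|_{L_{\Phi_i}(\R^n)}\le\sum_k2^{-k/m}$ for the infinite sum (either completeness of $L_{\Phi_i}(\R^n)$ together with a.e.\ convergence of a subsequence of the partial sums, or monotone convergence combined with Lemma~\ref{lemma:2.2} and the left-continuity of $\Phi_i$).
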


\medskip
\begin{proof}
	($(1) \Rightarrow (2)$) Suppose that (1) hold. Since $\Phi$ is a Young function and using Lemma \ref{lemma:1.1} we have $$t_i \leq \Phi_{i}^{-1}\Bigl(\Phi_{i}(t_i)\Bigr)\leq \Phi_{i}^{-1}\Bigl(\sum\limits_{i=1}^{m}\Phi_i(t_i)\Bigr)$$ for $i = 1,\cdots,m$. Hence $$\prod\limits_{i=1}^{m} t_i\leq \prod\limits_{i=1}^{m}\Phi_{i}^{-1}\Bigl(\sum\limits_{i=1}^{m}\Phi_i(t_i)\Bigr) \leq C \Phi^{-1}\Bigl(\sum\limits_{i=1}^{m}\Phi_i(t_i)\Bigr).$$ 
	
Because $\Phi$ is increasing and by Lemma \ref{lemma:1.1} (3), we have $$\Phi\Bigl(\frac{1}{C}\prod\limits_{i=1}^{m} t_i \Bigr) \leq \Phi\Bigl(\Phi^{-1}\Bigl(\sum\limits_{i=1}^{m}\Phi_i(t_i)\Bigr)\Bigr) \leq \sum\limits_{i=1}^{m}\Phi_i(t_i).$$
	
($(2) \Rightarrow (3)$). Suppose that (2) hold. Let $f_i$ be an element of $L_{\Phi_i}(\mathbb{R}^n)$. By Lemma \ref{lemma:2.2}, we have $$\int_{\mathbb{R}^n}\Phi_{i} \Bigl(\frac{|f_{i}(x)|}{\|f_i\|_{L_{\Phi_i}(\mathbb{R}^n)}} \Bigr) dx \leq1,$$ 
	for every $i=1,\dots,m$. On the other hand, we have
	\begin{align*}
	\int_{\mathbb{R}^n}\Phi\Bigl( \frac{1}{mC}\prod\limits_{i=1}^{m} \frac{|f_{i}(x)|}{\|f_i\|_{L_{\Phi_i}(\mathbb{R}^n)}}\Bigr) dx &\leq \frac{1}{m} \int_{\mathbb{R}^n}\Phi\Bigl( \frac{1}{C}\prod\limits_{i=1}^{m} \frac{|f_{i}(x)|}{\|f_i\|_{L_{\Phi_i}(\mathbb{R}^n)}}\Bigr) dx \\
	&\leq \frac{1}{m} \sum\limits_{i=1}^{m} \int_{\mathbb{R}^n}\Phi_{i}\Bigl(\frac{|f_{i}(x)|}{\|f_{i}\|_{L_{\Phi_i}(\mathbb{R}^n)}}\Bigr) dx \leq 1.
	\end{align*}
	
	By definition of $\| \cdot \|_{L_{\Phi}(\mathbb{R}^n)}$, we conclude that $$\left\|\prod\limits_{i=1}^{m} f_{i}\right\|_{L_{\Phi}(\mathbb{R}^n)} \leq mC \prod\limits_{i=1}^{m} \|f_{i}\|_{L_{\Phi_i}(\mathbb{R}^n)}.$$

($(3) \Leftrightarrow (4)$). Next, it is easy to prove that (3) implies (4). Now, suppose that (4) holds, then there exists $ \alpha >0$ such that $$\int_{\mathbb{R}^n}\Phi \Biggl(\frac{\prod\limits_{i=1}^{m}|f_{i}(x)|}{\alpha} \Biggr) dx \leq1.$$  
	
	By setting $M := \frac{\alpha}{\prod\limits_{i=1}^{m}\|f_i\|_{L_{\Phi_i}(\mathbb{R}^n)}} >0$, we have 
	$$\int_{\mathbb{R}^n}\Phi \Biggl(\frac{\prod\limits_{i=1}^{m}|f_{i}(x)|}{M\prod\limits_{i=1}^{m}\|f_i\|_{L_{\Phi_i}(\mathbb{R}^n)}} \Biggr) dx = \int_{\mathbb{R}^n}\Phi \Biggl(\frac{\prod\limits_{i=1}^{m}|f_{i}(x)|}{\alpha} \Biggr) dx \leq1.$$ 	
	
	By definition of Orlicz-norm we have $\left\Vert \prod\limits_{i=1}^m f_i \right\Vert_{L_{\Phi}(\mathbb{R}^n)}\leq M \prod\limits_{i=1}^m
	\| f_i \|_{L_{\Phi_i}(\mathbb{R}^n)}$.
	\\
($(3)\Rightarrow (1)$). Suppose that (3)  holds. Take an arbitrary open ball $B_0:=B(0,r_0)$ for $r_0>0$. Observe that $\|\chi_{B_0}\|_{L_{\Phi}(\mathbb{R}^n)} 
	= \left\|\prod\limits_{i=1}^{m}\chi_{B_0}\right\|_{L_{\Phi}(\mathbb{R}^n)}$. By using Lemma \ref{lemma:2.4} we have
$$\frac{1}{\Phi^{-1} \Big(\frac{1}{|B_0|} \Bigr)} =\|\chi_{B_0}\|_{L_{\Phi}(\mathbb{R}^n)} \leq M \prod\limits_{i=1}^{m} \|\chi_{B_0}\|_{L_{\Phi_i}(\mathbb{R}^n)} = M \prod\limits_{i=1}^{m}\frac{1}{\Phi^{-1}_i \Big(\frac{1}{|B_0|} \Bigr)}$$
for every open ball $B_0 \subseteq \mathbb{R}^n$. Since $r_0>0$ is arbitrary, we get $$\prod\limits_{i=1}^{m}\Phi_{i}^{-1}(t)\leq C \Phi^{-1}(t)$$ for every $t \geq 0$.  		
\end{proof}
\medskip

\noindent{\tt Remark}. For $m=2$ Theorem \ref{theorem:2.1} reduces to Theorem \ref{theorem:1.1}. Note that, for $ m=1$ Theorem \ref{theorem:2.1} may be viewed as inclusion properties of Orlicz spaces in \cite{Lech,Masta1}.
\medskip

\begin{corollary}\label{corollary:2.1}
	Let $m\geq 2$. If $1 \leq p, p_i < \infty$ for $i=1,\dots, m$,
	then the following statements are equivalent:
	
	{\parindent=0cm
		{\rm (1)}  $\sum_{i=1}^{m} \frac{1}{p_i} = \frac{1}{p}$.  
		
		{\rm (2)} $\left\Vert \prod\limits_{i=1}^m f_i \right\Vert_{L^{p}(\mathbb{R}^n)}\leq \prod\limits_{i=1}^m
		\| f_i \|_{L^{p_i}(\mathbb{R}^n)},$ for every $f_i\in L^{p_i}(\mathbb{R}^n)$,
		$i=1,\dots,m$.
		
		{\rm (3)} For every $f_i \in L^{p_i}(\mathbb{R}^n)$, then $\prod\limits_{i=1}^m f_i \in L^{p}(\mathbb{R}^n)$.
		
		\par}
\end{corollary}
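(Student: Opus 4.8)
The plan is to derive the corollary from Theorem~\ref{theorem:2.1} by taking power Young functions. Set $\Phi(t):=t^p$ and $\Phi_i(t):=t^{p_i}$ for $i=1,\dots,m$; since $p,p_i\ge 1$ these are Young functions, and from the definition of the generalized inverse one computes $\Phi^{-1}(t)=t^{1/p}$ and $\Phi_i^{-1}(t)=t^{1/p_i}$. With these choices, condition (1) of Theorem~\ref{theorem:2.1} becomes: there is a constant $C>0$ with $t^{\sum_{i=1}^m 1/p_i}\le C\,t^{1/p}$ for every $t\ge 0$, while condition (4) of Theorem~\ref{theorem:2.1} is literally statement (3) of the corollary.

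First I would record the elementary fact that, writing $\beta:=\sum_{i=1}^m \frac{1}{p_i}-\frac{1}{p}$, an inequality $t^{\beta}\le C$ valid for all $t>0$ forces $\beta=0$ (let $t\to\infty$ to get $\beta\le 0$, and $t\to 0^{+}$ to get $\beta\ge 0$), whereas $\beta=0$ makes it hold with $C=1$; hence statement (1) of the corollary is equivalent to condition (1) of Theorem~\ref{theorem:2.1}. Combining this with Theorem~\ref{theorem:2.1} yields at once the equivalence of statements (1) and (3) of the corollary and, moreover, that statement (1) of the corollary implies the norm inequality of statement (3) of Theorem~\ref{theorem:2.1} with \emph{some} constant $M>0$.

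What is not yet covered is statement (2) of the corollary, where the constant must be exactly $1$. The implication (2)$\Rightarrow$(3) is immediate, and (3)$\Rightarrow$(1) has just been established, so only (1)$\Rightarrow$(2) with sharp constant remains, and this I would prove directly rather than through Theorem~\ref{theorem:2.1}: assuming $\sum_{i=1}^m \frac{1}{p_i}=\frac{1}{p}$, set $q_i:=p_i/p$, so that $\sum_{i=1}^m \frac{1}{q_i}=1$; applying the classical generalized H\"older inequality with exponents summing to $1$ (constant $1$; see \cite{Masta2}) to $|f_i|^p\in L^{q_i}(\mathbb{R}^n)$ gives $\bigl\|\prod_{i=1}^m |f_i|^p\bigr\|_{L^1(\mathbb{R}^n)}\le \prod_{i=1}^m \bigl\||f_i|^p\bigr\|_{L^{q_i}(\mathbb{R}^n)}$, which reads $\bigl\|\prod_{i=1}^m f_i\bigr\|_{L^p(\mathbb{R}^n)}^p\le \prod_{i=1}^m \|f_i\|_{L^{p_i}(\mathbb{R}^n)}^p$; taking $p$-th roots gives (2). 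The only genuine obstacle is this sharp-constant step, since Theorem~\ref{theorem:2.1} as proved only delivers constant $mC$; one should also dispose of the trivial degenerate cases (some $f_i\equiv 0$, or $p=1$) separately. Alternatively, (1)$\Rightarrow$(2) can be obtained by a short induction on $m$ from the ordinary two-function H\"older inequality.
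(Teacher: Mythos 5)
Your proposal is correct and follows essentially the same route as the paper: specialize Theorem~\ref{theorem:2.1} to $\Phi(t)=t^p$, $\Phi_i(t)=t^{p_i}$ and observe that $\Phi^{-1}(t)=t^{1/p}$, $\Phi_i^{-1}(t)=t^{1/p_i}$, so that condition (1) of the theorem forces $\sum_{i=1}^m\frac1{p_i}=\frac1p$. The one place you diverge is the sharp-constant implication (1)$\Rightarrow$(2): the paper simply cites \cite{Ifro} for the equivalence of (1) and (2) and then invokes Theorem~\ref{theorem:2.1} for (2)$\Leftrightarrow$(3), whereas you prove (1)$\Rightarrow$(2) directly by passing to $|f_i|^p\in L^{q_i}$ with $q_i=p_i/p$ and applying the classical H\"older inequality with constant $1$. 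You are right to flag this as the step Theorem~\ref{theorem:2.1} cannot deliver (its proof only yields the constant $mC$), and your version has the advantage of being self-contained; you are also more careful than the paper in noting that the corollary's statement (2), with constant exactly $1$, is formally stronger than statement (3) of Theorem~\ref{theorem:2.1}, so that the chain of implications must pass through (1). Both arguments are valid.
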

\begin{proof}
	The proof that $(1)$ and $(2)$ are equivalent can be found in \cite{Ifro}. Next, by setting $\Phi(t):=t^p$ and $\Phi_i(t):=t^{p_i}$  for $ 1\leq p, p_i < \infty$ in the Theorem \ref{theorem:2.1}, we have $ (2)$ and $(3)$ are equivalent.
\end{proof}
For weak Orlicz spaces, we also have the sufficient and necessary conditions for generalized H\"{o}lder's inequality. To prove the result, we use the following lemmas.
\bigskip
\begin{lemma}\label{lemma:1.6}
	If $ f \in wL_{\Phi}(\mathbb{R}^n)$, then $$ \mathop {\sup }\limits_{t > 0} \Phi(t)\Bigl| \Bigl \{ x \in \mathbb{R}^n : \frac{|f(x)|}{\| f \|_{wL_{\Phi}(\mathbb{R}^n)} + \epsilon } > t \Bigr \} \Bigr| \leq1$$ for every $\epsilon > 0$.

\end{lemma}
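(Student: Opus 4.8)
The plan is to adapt the proof of Lemma~\ref{lemma:2.2} to the weak setting, relying only on the monotonicity of the distribution function in its scaling parameter together with the defining infimum of the quasi-norm.

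First I would set $b_0 := \|f\|_{wL_{\Phi}(\mathbb{R}^n)}$, which is finite since $f \in wL_{\Phi}(\mathbb{R}^n)$. By the definition of $b_0$ as an infimum, for the prescribed $\epsilon > 0$ there is a number $b$ belonging to the defining set with $b_0 \le b < b_0 + \epsilon$; in particular $b > 0$ and
$$\sup_{t>0} \Phi(t)\,\bigl|\{x \in \mathbb{R}^n : |f(x)|/b > t\}\bigr| \le 1.$$
Next, for each fixed $t > 0$, the inequality $0 < b \le b_0 + \epsilon$ yields the inclusion $\{x \in \mathbb{R}^n : |f(x)|/(b_0+\epsilon) > t\} \subseteq \{x \in \mathbb{R}^n : |f(x)|/b > t\}$, so by monotonicity of Lebesgue measure $\bigl|\{x : |f(x)|/(b_0+\epsilon) > t\}\bigr| \le \bigl|\{x : |f(x)|/b > t\}\bigr|$. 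Multiplying by $\Phi(t) \ge 0$ and invoking the bound just displayed gives $\Phi(t)\,\bigl|\{x : |f(x)|/(b_0+\epsilon) > t\}\bigr| \le 1$ for every $t>0$; taking the supremum over $t>0$ completes the argument.

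I do not foresee a genuine obstacle. The only subtlety worth flagging is that the infimum defining $\|f\|_{wL_{\Phi}(\mathbb{R}^n)}$ need not be attained, so one cannot simply substitute $b = b_0$; the $\epsilon$-approximation producing an admissible $b$ with $b < b_0 + \epsilon$ is exactly what makes the set inclusion in the middle step valid, and is thus the crux of this (short) proof.
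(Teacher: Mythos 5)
Your argument is correct and is essentially identical to the paper's own proof: both select an admissible $b$ with $b \le \|f\|_{wL_{\Phi}(\mathbb{R}^n)} + \epsilon$ from the defining infimum and then transfer the bound via monotonicity of the distribution function. No further comment is needed.
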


\begin{proof}
Let $f\in wL_{\Phi} (\mathbb{R}^{n})$. Take an arbitrary $\epsilon > 0$, then there exists $b_{\epsilon} > 0$ such that $b_{\epsilon} \leq \| f \|_{wL_{\Phi}(\mathbb{R}^n)} + \epsilon$ and $$ \mathop {\sup }\limits_{t > 0}\Phi(t)\Bigl| \Bigl\{ x \in \mathbb{R}^n : \frac{|f(x)|}{b_{\epsilon} } > t \Bigr\} \Bigr|\leq1.$$

Since $\frac{|f(x)|}{b_{\epsilon}} \geq \frac{|f(x)|}{\| f \|_{wL_{\Phi}(\mathbb{R}^n)} + \epsilon}$, we have

$$\Phi(t)\Bigl|\Bigl\{ x \in \mathbb{R}^n : \frac{|f(x)|}{\| f \|_{wL_{\Phi}(\mathbb{R}^n)}+\epsilon} > t \Bigr\} \Bigr| \leq \Phi(t)\Bigl| \Bigl\{ x \in \mathbb{R}^n : \frac{|f(x)|}{b_{\epsilon} } > t \Bigr\}\Bigl| \leq 1$$ for every $t > 0$.

By taking supremum over $t>0$, we conlude that $$ \mathop {\sup }\limits_{t > 0} \Phi(t) \Bigl| \Bigl\{ x \in \mathbb{R}^n : \frac{|f(x)|}{\|f\|_{wL_{\Phi}(\mathbb{R}^n)} + \epsilon } > t \Bigr\} \Bigr| \leq1 $$ for every $\epsilon > 0$.
\end{proof}
\medskip

\begin{lemma}\label{lemma:1.7}\cite{Yong,Masta1} Let $\Phi$ be a Young function,
	$a\in\mathbb{R}^n$, and $r>0$. Then
	$$\| \chi_{B(a,r)} \|_{wL_\Phi(\mathbb{R}^n)} = \frac{1}{\Phi^{-1}(\frac{1}
		{|B(a,r)|})},$$ where $|B(a,r)|$ denotes the volume of $B(a,r)$.
\end{lemma}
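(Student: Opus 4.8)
The plan is to mimic the proof of Lemma \ref{lemma:2.4} (the strong-norm version), but working with the distribution-function expression that defines the weak Orlicz quasi-norm. Fix $a \in \mathbb{R}^n$ and $r > 0$, and write $B := B(a,r)$. First I would compute the distribution function of $\chi_B$: for any $b > 0$ and $t > 0$,
\[
\Bigl\{ x \in \mathbb{R}^n : \frac{\chi_B(x)}{b} > t \Bigr\}
= \begin{cases} B, & 0 < t < \tfrac{1}{b}, \\ \emptyset, & t \geq \tfrac{1}{b}, \end{cases}
\]
so that $\sup_{t>0} \Phi(t)\,|\{ x : \chi_B(x)/b > t\}| = |B| \cdot \sup_{0 < t < 1/b} \Phi(t)$. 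Since $\Phi$ is nondecreasing and left-continuous, $\sup_{0 < t < 1/b} \Phi(t) = \Phi(1/b^-) = \Phi(1/b)$ when $\Phi$ is continuous at $1/b$, and in general equals the left limit; in either case the relevant quantity is controlled by $\Phi(1/b)$, and I will phrase the argument via $\Phi^{-1}$ to sidestep continuity subtleties.

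Next, by definition,
\[
\| \chi_B \|_{wL_\Phi(\mathbb{R}^n)}
= \inf \Bigl\{ b > 0 : |B| \cdot \sup_{0 < t < 1/b} \Phi(t) \leq 1 \Bigr\}.
\]
The condition $|B| \sup_{0<t<1/b}\Phi(t) \leq 1$ is equivalent to $\Phi(t) \leq 1/|B|$ for all $t < 1/b$, which by the definition $\Phi^{-1}(s) = \inf\{r \geq 0 : \Phi(r) > s\}$ holds precisely when $1/b \leq \Phi^{-1}(1/|B|)$, i.e. $b \geq 1/\Phi^{-1}(1/|B|)$. Taking the infimum over such $b$ gives $\| \chi_B \|_{wL_\Phi(\mathbb{R}^n)} = 1/\Phi^{-1}(1/|B|)$, as claimed. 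Lemma \ref{lemma:1.1}(1)--(3) can be invoked to handle the degenerate case $\Phi^{-1}(1/|B|) = 0$ (interpreting the right-hand side as $+\infty$) and to justify the equivalence in the last step.

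The main obstacle is the careful handling of the supremum over $t$ near the jump point $1/b$, since $\Phi$ need only be left-continuous, not continuous, so one must be precise about whether the supremum is attained and how it relates to $\Phi^{-1}$. I expect the cleanest route is to avoid evaluating $\Phi$ at the endpoint altogether and instead argue directly with the inequality $\Phi(t) \leq 1/|B|$ for all $t$ in an open interval, translating this into a statement about $\Phi^{-1}$ via its definition as an infimum; this is exactly the kind of manipulation already used implicitly in Lemma \ref{lemma:2.4}. Everything else is a routine unwinding of definitions.
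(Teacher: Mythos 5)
The paper does not actually prove Lemma \ref{lemma:1.7}; it is quoted from the references \cite{Yong,Masta1} without proof, so there is no internal argument to compare against. Your proof is correct and self-contained: the distribution set of $\chi_B/b$ is indeed $B$ for $t<1/b$ and empty otherwise, and the equivalence ``$\Phi(t)\le 1/|B|$ for all $t<1/b$ $\iff$ $1/b\le \Phi^{-1}(1/|B|)$'' follows directly from the definition $\Phi^{-1}(s)=\inf\{r\ge 0:\Phi(r)>s\}$ in both directions (no $r<1/b$ lies in the set $\{r:\Phi(r)>s\}$ iff that set is contained in $[1/b,\infty)$). Two minor remarks. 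First, you over-hedge on the supremum near the endpoint: since $\Phi$ is nondecreasing and left-continuous, $\sup_{0<t<1/b}\Phi(t)$ equals the left limit, which \emph{is} $\Phi(1/b)$ by left-continuity, so there is no continuity subtlety to sidestep; in any case your route through $\Phi^{-1}$ avoids evaluating at the endpoint altogether, which is cleaner. Second, the degenerate case $\Phi^{-1}(1/|B|)=0$ cannot occur here: convexity with $\Phi(0)=0$ and finite values gives $\Phi(\lambda t)\le\lambda\Phi(t)\to 0$ as $\lambda\to 0^+$, so $\Phi^{-1}(s)>0$ for every $s>0$; likewise $\lim_{t\to\infty}\Phi(t)=\infty$ rules out $\Phi^{-1}(1/|B|)=\infty$. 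With those observations your argument closes completely.
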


\begin{lemma}\label{lemma:1.8}
	If $ f \in wL_\Phi(\mathbb{R}^n)$, then there exists $ \alpha >0$ such that $$\mathop {\sup }\limits_{t > 0} \Phi(t)\left|  \left\lbrace  x \in \mathbb{R}^n : \frac{|f(x)|}{\alpha} > t \right\rbrace  \right| \leq1.$$ 
\end{lemma}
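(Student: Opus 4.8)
The plan is to deduce this immediately from Lemma \ref{lemma:1.6}. Since $f\in wL_\Phi(\mathbb{R}^n)$, by definition $\|f\|_{wL_\Phi(\mathbb{R}^n)}<\infty$, so the quantity $\|f\|_{wL_\Phi(\mathbb{R}^n)}+\epsilon$ is a finite positive number for any choice of $\epsilon>0$. I would simply fix $\epsilon=1$ (any positive value works) and set $\alpha:=\|f\|_{wL_\Phi(\mathbb{R}^n)}+1>0$.

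With this choice, Lemma \ref{lemma:1.6} applied with $\epsilon=1$ gives precisely
$$\mathop{\sup}\limits_{t>0}\Phi(t)\Bigl|\Bigl\{x\in\mathbb{R}^n:\frac{|f(x)|}{\alpha}>t\Bigr\}\Bigr|=\mathop{\sup}\limits_{t>0}\Phi(t)\Bigl|\Bigl\{x\in\mathbb{R}^n:\frac{|f(x)|}{\|f\|_{wL_\Phi(\mathbb{R}^n)}+1}>t\Bigr\}\Bigr|\leq1,$$
which is exactly the claimed statement. Alternatively, one can argue directly from the definition of the quasi-norm as an infimum: because $\|f\|_{wL_\Phi(\mathbb{R}^n)}<\infty$, the set $\{b>0:\sup_{t>0}\Phi(t)|\{|f|/b>t\}|\leq1\}$ is nonempty, and picking any $\alpha$ in this set does the job.

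There is essentially no obstacle here; the only point requiring a word of care is the positivity of $\alpha$, which is immediate since $\|f\|_{wL_\Phi(\mathbb{R}^n)}\ge 0$ and we add a strictly positive constant (equivalently, the defining infimum ranges over $b>0$). Thus the lemma follows at once.
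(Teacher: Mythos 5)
Your proof is correct: the paper itself leaves this lemma to the reader, and your deduction from Lemma \ref{lemma:1.6} with $\epsilon=1$ (or, equivalently, the direct appeal to the nonemptiness of the defining infimum set when $\|f\|_{wL_\Phi(\mathbb{R}^n)}<\infty$) is exactly the intended one-line argument. Nothing is missing.
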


We leave the proof of Lemma \ref{lemma:1.8} to the reader. Finaly, we come to the generalized H\"{o}lder's inequality in weak Orlicz spaces as follows.

\begin{theorem}\label{theorem:3.1}
	Let $m\geq 2$. If $\Phi$ and $\Phi_i$ are Young functions for $i=1,\dots, m$,
	then the following statements are equivalent:
	
	{\parindent=0cm
		{\rm (1)} There exists a constant $C>0$ such that $$ \prod\limits_{i=1}^m \Phi_{i}^{-1}(t) \leq C \Phi^{-1}(t)$$ for every $ t \geq 0$.  
		
		{\rm (2)} There exists a constant $C >0$ such that for all $t_i \ge 0$, $$ \Phi \left( \frac{\prod\limits_{i=1}^m t_i}{C}\right)  \leq \sum_{i=1}^{m} \Phi_i(t_i).$$
		
		{\rm (3)} There exists a constant $M>0$ such that $$\left\Vert \prod\limits_{i=1}^m f_i \right\Vert_{wL_{\Phi}(\mathbb{R}^n)}\leq M \prod\limits_{i=1}^m
		\| f_i \|_{wL_{\Phi_i}(\mathbb{R}^n)},$$ for every $f_i\in wL_{\Phi_i}(\mathbb{R}^n)$,
		$i=1,\dots,m$.
		
		{\rm (4)} For every $f_i \in wL_{\Phi_i}(\mathbb{R}^n)$, then $\prod\limits_{i=1}^m f_i \in wL_{\Phi}(\mathbb{R}^n)$.
		
		\par}
\end{theorem}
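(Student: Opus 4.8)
The plan is to establish the cycle $(1)\Rightarrow(2)\Rightarrow(3)$, $(3)\Leftrightarrow(4)$, $(3)\Rightarrow(1)$, paralleling the proof of Theorem~\ref{theorem:2.1} with the weak ingredients substituted for the strong ones. The implication $(1)\Rightarrow(2)$ involves only the Young functions $\Phi,\Phi_1,\dots,\Phi_m$ and not the function spaces at all, so I would reproduce verbatim the argument used for Theorem~\ref{theorem:2.1}: Lemma~\ref{lemma:1.1}(3) gives $t_i\le\Phi_i^{-1}\bigl(\sum_j\Phi_j(t_j)\bigr)$, multiplying over $i$ and applying (1) yields $\prod_i t_i\le C\,\Phi^{-1}\bigl(\sum_j\Phi_j(t_j)\bigr)$, and applying $\Phi$ together with Lemma~\ref{lemma:1.1}(3) once more gives (2).

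The substantive step is $(2)\Rightarrow(3)$, and here --- unlike the strong case, where one merely integrates a pointwise convexity estimate --- I would argue through distribution functions. Given $f_i\in wL_{\Phi_i}(\mathbb{R}^n)$, fix $\epsilon>0$ and put $g_i:=|f_i|/(\|f_i\|_{wL_{\Phi_i}(\mathbb{R}^n)}+\epsilon)$, so that Lemma~\ref{lemma:1.6} yields $|\{g_i>s\}|\le 1/\Phi_i(s)$ for every $s>0$. Applying hypothesis (2) pointwise with $t_i=g_i(x)$ gives $\Phi\bigl(\tfrac1C\prod_i g_i(x)\bigr)\le\sum_i\Phi_i(g_i(x))$, whence for each $s>0$ one has $\{\tfrac1C\prod_i g_i>\Phi^{-1}(s)\}\subseteq\{\sum_i\Phi_i(g_i)>s\}\subseteq\bigcup_i\{\Phi_i(g_i)>s/m\}$. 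From the definition of the generalized inverse together with Lemma~\ref{lemma:1.1} one identifies $\{z:\Phi(z)>s\}$ with $(\Phi^{-1}(s),\infty)$ and $\{\Phi_i(g_i)>s/m\}$ with $\{g_i>\Phi_i^{-1}(s/m)\}$, and letting $r\downarrow\Phi_i^{-1}(s/m)$ in the estimate $|\{g_i>r\}|\le 1/\Phi_i(r)<m/s$ shows each of the latter sets has measure at most $m/s$. Hence $|\{\prod_i g_i>C\Phi^{-1}(s)\}|\le m^2/s$ for all $s>0$; then for arbitrary $y>0$ and any $s<\Phi(y/C)$ one has $\Phi^{-1}(s)\le y/C$, so $|\{\prod_i g_i>y\}|\le m^2/\Phi(y/C)$, and combining this with the convexity bound $\Phi(y/(m^2C))\le\tfrac1{m^2}\Phi(y/C)$ gives $\sup_{y>0}\Phi(y/(m^2C))\,|\{\prod_i g_i>y\}|\le 1$, i.e.\ $\|\prod_i g_i\|_{wL_\Phi(\mathbb{R}^n)}\le m^2C$. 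Positive homogeneity of the quasi-norm then gives $\|\prod_i f_i\|_{wL_\Phi(\mathbb{R}^n)}\le m^2C\prod_i(\|f_i\|_{wL_{\Phi_i}(\mathbb{R}^n)}+\epsilon)$, and letting $\epsilon\downarrow0$ finishes this implication with $M:=m^2C$.

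For the remaining implications I would mimic Theorem~\ref{theorem:2.1}. $(3)\Rightarrow(4)$ is immediate. For $(4)\Rightarrow(3)$ one uses Lemma~\ref{lemma:1.8} to produce $\alpha>0$ with $\sup_t\Phi(t)\,|\{\tfrac1\alpha\prod_i|f_i|>t\}|\le1$ and then rescales by $\prod_i\|f_i\|_{wL_{\Phi_i}(\mathbb{R}^n)}$, exactly as before. For $(3)\Rightarrow(1)$, apply (3) to $f_i=\chi_{B_0}$ with $B_0=B(0,r_0)$, use $\chi_{B_0}=\prod_{i=1}^m\chi_{B_0}$ and Lemma~\ref{lemma:1.7} to obtain $1/\Phi^{-1}(1/|B_0|)\le M\prod_i 1/\Phi_i^{-1}(1/|B_0|)$; since $1/|B_0|$ runs over all of $(0,\infty)$ as $r_0$ varies, this is precisely (1) with $C=M$, the case $t=0$ being trivial by Lemma~\ref{lemma:1.1}(1).

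The main obstacle is the step $(2)\Rightarrow(3)$: in the weak-type setting one cannot integrate a pointwise inequality, so the pointwise bound furnished by (2) has to be transferred to the distribution function of the product $\prod_i g_i$. The delicate points there are (i) identifying the super-level sets $\{z:\Phi_i(z)>\tau\}$ with $\{z:z>\Phi_i^{-1}(\tau)\}$, and $\{z:\Phi(z)>s\}$ with $\{z:z>\Phi^{-1}(s)\}$, when $\Phi^{-1}$ and the $\Phi_i^{-1}$ are merely generalized inverses --- this is exactly where the left-continuity of Young functions and Lemma~\ref{lemma:1.1} are needed --- and (ii) bookkeeping the numerical constants, which is why the extra factor $m^2$ shows up in $M$. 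Once that step is secured, everything else is a routine transcription of the proof of Theorem~\ref{theorem:2.1}.
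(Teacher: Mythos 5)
Your proof is correct, and for $(1)\Rightarrow(2)$, $(3)\Leftrightarrow(4)$ and $(3)\Rightarrow(1)$ it coincides with the paper's argument. The genuine divergence is in the key step $(2)\Rightarrow(3)$. Both proofs first regularize the norms (the paper takes $\epsilon=\|f_i\|_{wL_{\Phi_i}(\mathbb{R}^n)}/k$ and lets $k\to\infty$; you take a generic $\epsilon\downarrow 0$), but the paper then proceeds by a pointwise substitution: for each $t>0$ it sets $t_0:=t\,mC\prod_i(1+\tfrac1k)\|f_i\|_{wL_{\Phi_i}(\mathbb{R}^n)}/\prod_i|f_i(x)|$, rewrites $\Phi(t)\,|\{\cdots>t\}|$ as $\Phi\bigl(\prod_i t_0^{1/m}|f_i(x)|/(mC\prod_i(1+\tfrac1k)\|f_i\|_{wL_{\Phi_i}(\mathbb{R}^n)})\bigr)\,|\{1>t_0\}|$, applies hypothesis (2) with $t_i=t_0^{1/m}|f_i(x)|/((1+\tfrac1k)\|f_i\|_{wL_{\Phi_i}(\mathbb{R}^n)})$, and bounds each summand by $1$ via Lemma \ref{lemma:1.6}, obtaining the constant $mC$. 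You instead convert the pointwise inequality of (2) into the superlevel-set inclusion $\{\tfrac1C\prod_i g_i>\Phi^{-1}(s)\}\subseteq\bigcup_i\{\Phi_i(g_i)>s/m\}$, bound each piece by $m/s$ using the generalized inverse (Lemma \ref{lemma:1.1}) and left-continuity, and return to the weak quasi-norm via the convexity estimate $\Phi(y/(m^2C))\le\tfrac1{m^2}\Phi(y/C)$, ending with the slightly larger constant $m^2C$. What your route buys is transparency and rigor: in the paper's computation $t_0$ depends on $x$, so its displayed identities are really shorthand whose honest justification is exactly the superlevel-set bookkeeping you carry out explicitly; what the paper's route buys is the sharper constant $mC$. (Both treatments of $(4)\Rightarrow(3)$ share the same weakness: the constant $M=\alpha/\prod_i\|f_i\|_{wL_{\Phi_i}(\mathbb{R}^n)}$ depends on the chosen $f_i$, so as written neither argument produces a uniform $M$; since you mirror the paper there, this is not a point of divergence.)
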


\begin{proof}
	As before, we have that (1) and (2) are equivalent. We shall prove that (2) implies (3) and (3) implies (1). Suppose that (2) hold. Let $f_i \in wL_{\Phi_i}(\mathbb{R}^n)$. By Lemma \ref{lemma:1.6}, we have $$\Phi_{i} (t)\Bigl| \Bigl\{ x \in \mathbb{R}^n : \frac{|f_i(x)|}{\| f_i \|_{wL_{\Phi_i}(\mathbb{R}^n)} + \frac{\| f_i \|_{wL_{\Phi_i}(\mathbb{R}^n)}}{k}} > t \Bigr\} \Bigr| \leq1,$$ for every $ k \in \mathbb{N}$.

	Define $A_{\Phi}(t):=\Phi(t)\left|  \left\lbrace  x \in \mathbb{R}^n : \frac{\prod\limits_{i=1}^{m}|f_{i}(x)|}{mC\prod\limits_{i=1}^{m}(1+\frac{1}{k})\|f_i\|_{wL_{\Phi_i}(\mathbb{R}^n)}} > t \right\rbrace  \right|$. 
	
	Next by setting $ t_0 :=  \frac{t mC\prod\limits_{i=1}^{m}(1+\frac{1}{k})\|f_i\|_{wL_{\Phi_i}(\mathbb{R}^n)}}{\prod\limits_{i=1}^{m}|f_{i}(x)|}$, we have
	\begin{align*}
	A_{\Phi}(t)& =  \Phi \left( \frac{\prod\limits_{i=1}^{m}(t_0)^{\frac{1}{m}}|f_{i}(x)|}{mC\prod\limits_{i=1}^{m}(1+\frac{1}{k})\|f_i\|_{wL_{\Phi_i}(\mathbb{R}^n)}}\right)|\{ x \in \mathbb{R}^n : 1 > t_0 \}|  \\
	& \leq \frac{1}{m}\Phi \left( \frac{\prod\limits_{i=1}^{m}(t_0)^{\frac{1}{m}}|f_{i}(x)|}{C\prod\limits_{i=1}^{m}(1+\frac{1}{k})\|f_i\|_{wL_{\Phi_i}(\mathbb{R}^n)}}\right)|\{ x \in \mathbb{R}^n : 1 > t_0 \}|  \\
	& \leq  \frac{1}{m} \left(\sum\limits_{i=1}^{m}  \Phi_{i}\Bigl(\frac{(t_0)^{\frac{1}{m}}|f_{i}(x)|}{(1+\frac{1}{k})\|f_{i}\|_{wL_{\Phi_i}(\mathbb{R}^n)}}\Bigr)\right)|\{ x \in \mathbb{R}^n : 1 > t_0 \}| .\\
	\end{align*}
	
	On the other hand,

	\begin{align*}
	\Phi_{i}\Bigl(\frac{(t_0)^{\frac{1}{m}}|f_{i}(x)|}{(1+\frac{1}{k})\|f_{i}\|_{wL_{\Phi_i}(\mathbb{R}^n)}}\Bigr)&| \{ x \in \mathbb{R}^n : 1 > t_0 \}|\\
	& = \Phi_{i}(t_i)\Bigl| \Bigl\{ x \in \mathbb{R}^n : \Bigl(\frac{|f_i(x)|}{(1+\frac{1}{k})\|f_i\|_{wL_{\Phi_i}(\mathbb{R}^n)}} \Bigr)^m > t_i^m \Bigr\} \Bigr| \\
	& = \Phi_{i}(t_i)\Bigl| \Bigl\{ x \in \mathbb{R}^n : \frac{|f_i(x)|}{(1+\frac{1}{k})\|f_i\|_{wL_{\Phi_i}(\mathbb{R}^n)}} > t_i \Bigr\}\Bigr| \\
	& \leq 1,
	\end{align*}
	
	where $ t_i := \frac{t_0^{\frac{1}{m}}|f_i(x)|}{(1+\frac{1}{k})\|f_i\|_{wL_{\Phi_i}(\mathbb{R}^n)}}$ for $i = 1, \cdots, m$.\\
	
	This show that 
	
	\begin{align*}
	A_{\Phi}(t)& =  \Phi \left( \frac{\prod\limits_{i=1}^{m}(t_0)^{\frac{1}{m}}|f_{i}(x)|}{mC\prod\limits_{i=1}^{m}(1+\frac{1}{k})\|f_i\|_{wL_{\Phi_i}(\mathbb{R}^n)}}\right)|\{ x \in \mathbb{R}^n : 1 > t_0 \}|  \\
	& \leq \frac{1}{m} \Phi \left( \frac{\prod\limits_{i=1}^{m}(t_0)^{\frac{1}{m}}|f_{i}(x)|}{C\prod\limits_{i=1}^{m}(1+\frac{1}{k})\|f_i\|_{wL_{\Phi_i}(\mathbb{R}^n)}}\right)|\{ x \in \mathbb{R}^n : 1 > t_0 \}|  \\
	& \leq \frac{1}{m}\left(\sum\limits_{i=1}^{m}  \Phi_{i}\Bigl(\frac{(t_0)^{\frac{1}{m}}|f_{i}(x)|}{(1+\frac{1}{k})\|f_{i}\|_{wL_{\Phi_i}(\mathbb{R}^n)}}\Bigr)\right)|\{ x \in \mathbb{R}^n : 1 > t_0 \}| \\
	&\leq 1.
	\end{align*}
	
	Since $ t > 0$ is an arbitrary positive real number, we get $$ \mathop {\sup }\limits_{t > 0} \Phi(t)\left|  \left\lbrace  x \in \mathbb{R}^n : \frac{\prod\limits_{i=1}^{m}|f_{i}(x)|}{mC\prod\limits_{i=1}^{m}(1+\frac{1}{k})\|f_i\|_{wL_{\Phi_i}(\mathbb{R}^n)}} > t \right\rbrace  \right| \leq1.$$
	
	By definition of $\| \cdot \|_{ wL_{\Phi}(\mathbb{R}^n)}$, we have $$\left\|\prod\limits_{i=1}^{m} f_{i}\right\|_{ wL_{\Phi}(\mathbb{R}^n)} \leq mC (1+\frac{1}{k})^{m}\prod\limits_{i=1}^{m} \|f_{i}\|_{wL_{\Phi_i}(\mathbb{R}^n)}.$$
	
	For $ k \rightarrow \infty$, we have $(1+\frac{1}{k})^{m} \rightarrow 1$. Hence we conclude that  $$\left\|\prod\limits_{i=1}^{m} f_{i}\right\|_{ wL_{\Phi}(\mathbb{R}^n)} \leq mC\prod\limits_{i=1}^{m} \|f_{i}\|_{wL_{\Phi_i}(\mathbb{R}^n)}.$$
	
($(3) \Leftrightarrow (4)$). Next, it is easy to see that (3) implies (4). Now, supppose that (4) holds, by using Lemma \ref{lemma:1.8}, there exists $ \alpha >0$ such that $$\mathop {\sup }\limits_{t > 0} \Phi(t)\left|  \left\lbrace  x \in \mathbb{R}^n : \frac{\prod\limits_{i=1}^{m}|f_{i}(x)|}{\alpha} > t \right\rbrace  \right| \leq1.$$

By setting $M := \frac{\alpha}{\prod\limits_{i=1}^{m}\|f_i\|_{wL_{\Phi_i}(\mathbb{R}^n)}} >0$, we have 
\begin{footnotesize}
$$\mathop {\sup }\limits_{t > 0} \Phi(t)\left|  \left\lbrace  x \in \mathbb{R}^n : \frac{\prod\limits_{i=1}^{m}|f_{i}(x)|}{M\prod\limits_{i=1}^{m}\|f_i\|_{wL_{\Phi_i}(\mathbb{R}^n)}} > t \right\rbrace  \right| = \mathop {\sup }\limits_{t > 0} \Phi(t)\left|  \left\lbrace  x \in \mathbb{R}^n : \frac{\prod\limits_{i=1}^{m}|f_{i}(x)|}{\alpha} > t \right\rbrace  \right|  \leq1.$$ 
\end{footnotesize}	

By definition of $\|\cdot\|_{wL_{\Phi}(\mathbb{R}^n)}$, we have $\left\Vert \prod\limits_{i=1}^m f_i \right\Vert_{wL_{\Phi}(\mathbb{R}^n)}\leq M \prod\limits_{i=1}^m
\| f_i \|_{wL_{\Phi_i}(\mathbb{R}^n)}$. Thus (2) implies (3).
	
Suppose now that (3)  holds. Take an arbitrary open ball $B_0:=B(0,r_0)$ for $r_0>0$. Observe that $\|\chi_{B_0}\|_{wL_{\Phi}(\mathbb{R}^n)}= \left\|\prod\limits_{i=1}^{m}\chi_{B_0}\right\|_{wL_{\Phi}(\mathbb{R}^n)}$. By using Lemma \ref{lemma:1.7}, we have
	
$\frac{1}{\Phi^{-1} \Big(\frac{1}{|B_0|} \Bigr)}=\|\chi_{B_0}\|_{wL_{\Phi}(\mathbb{R}^n)}
	\leq M \prod\limits_{i=1}^{m} \|\chi_{B_0}\|_{wL_{\Phi_i}(\mathbb{R}^n)}
	= M \prod\limits_{i=1}^{m}\frac{1}{\Phi^{-1}_i \Big(\frac{1}{|B_0|} \Bigr)}$
	
	for every open ball $B_0 \subseteq \mathbb{R}^n$. Since $r_0>0$ is arbitrary, we get $$\prod\limits_{i=1}^{m}\Phi_{i}^{-1}(t)\leq M \Phi^{-1}(t)$$
	for every $t \geq 0$. Hence (3) implies (1), and we are done.  		
\end{proof}

\begin{corollary}\label{corollary:2.2}
	Let $m\geq 2$. If $1 \leq p, p_i < \infty$ for $i=1,\dots, m$,
	then the following statements are equivalent:
	
	{\parindent=0cm
		{\rm (1)}  $\sum_{i=1}^{m} \frac{1}{p_i} = \frac{1}{p}$.  
		
		{\rm (2)} $\left\Vert \prod\limits_{i=1}^m f_i \right\Vert_{wL^{p}(\mathbb{R}^n)}\leq \prod\limits_{i=1}^m
		\| f_i \|_{wL^{p_i}(\mathbb{R}^n)},$ for every $f_i\in wL^{p_i}(\mathbb{R}^n)$,
		$i=1,\dots,m$.
		
		{\rm (3)} For every $f_i \in wL^{p_i}(\mathbb{R}^n)$, then $\prod\limits_{i=1}^m f_i \in wL^{p}(\mathbb{R}^n)$.
		
		\par}
\end{corollary}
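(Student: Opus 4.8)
The plan is to imitate the proof of Corollary~\ref{corollary:2.1}: specialize the Young functions to powers and, for the one direction that Theorem~\ref{theorem:3.1} does not provide with the optimal constant, quote the sharp generalized H\"{o}lder inequality in weak Lebesgue spaces from \cite{Ifro}. First I would dispose of $(2)\Rightarrow(3)$, which is immediate: if $\bigl\|\prod_{i=1}^m f_i\bigr\|_{wL^p(\mathbb{R}^n)}\le\prod_{i=1}^m\|f_i\|_{wL^{p_i}(\mathbb{R}^n)}<\infty$ whenever each $f_i\in wL^{p_i}(\mathbb{R}^n)$, then in particular $\prod_{i=1}^m f_i\in wL^p(\mathbb{R}^n)$. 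It then remains to treat $(3)\Rightarrow(1)$ and $(1)\Rightarrow(2)$.

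For $(3)\Rightarrow(1)$ I would set $\Phi(t):=t^p$ and $\Phi_i(t):=t^{p_i}$ for $i=1,\dots,m$. Since $p,p_i\ge 1$, these are Young functions, one has $wL_{\Phi}(\mathbb{R}^n)=wL^p(\mathbb{R}^n)$ and $wL_{\Phi_i}(\mathbb{R}^n)=wL^{p_i}(\mathbb{R}^n)$, and the generalized inverses are $\Phi^{-1}(s)=s^{1/p}$ and $\Phi_i^{-1}(s)=s^{1/p_i}$. With these choices, statement $(3)$ of the corollary is exactly statement $(4)$ of Theorem~\ref{theorem:3.1}, so Theorem~\ref{theorem:3.1} yields a constant $M>0$ with $\prod_{i=1}^m s^{1/p_i}\le M s^{1/p}$, i.e. $s^{\sum_{i=1}^m 1/p_i-1/p}\le M$, for every $s>0$. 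Letting $s\to\infty$ excludes $\sum_{i=1}^m 1/p_i>1/p$ and letting $s\to 0^+$ excludes $\sum_{i=1}^m 1/p_i<1/p$, so $\sum_{i=1}^m 1/p_i=1/p$, which is $(1)$.

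For $(1)\Rightarrow(2)$ I would argue directly on distribution functions. After normalizing so that $\|f_i\|_{wL^{p_i}(\mathbb{R}^n)}=1$, that is $|\{x:|f_i(x)|>\tau\}|\le\tau^{-p_i}$ for all $\tau>0$, the task is to show $|\{x:\prod_{i=1}^m|f_i(x)|>s\}|\le s^{-p}$ for every $s>0$, using $\sum_{i=1}^m 1/p_i=1/p$. The natural device is the inclusion $\{\prod_i|f_i|>s\}\subseteq\bigcup_{i=1}^m\{|f_i|>\mu_i\}$, valid whenever $\prod_i\mu_i\ge s$, followed by the union bound and the optimal (Lagrange-multiplier) choice of thresholds $\mu_i$, namely $\mu_i^{p_i}$ proportional to $1/p_i$ subject to $\prod_i\mu_i=s$.

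The step I expect to be the main obstacle is controlling the constant in $(1)\Rightarrow(2)$. Exactly as the factor $m$ is forced by the convexity splitting in the proof of Theorem~\ref{theorem:3.1}, the union-bound argument above yields a constant that is in general strictly larger than $1$ (for $m=2$, $p_1=p_2=2$, $p=1$ it gives $2$), and a direct computation with two step functions concentrated on disjoint sets of equal measure shows that $1$ is in fact not attainable with the quasi-norm above; one should therefore read $(2)$ with a constant $M=M(p_1,\dots,p_m)\ge 1$, consistently with Theorem~\ref{theorem:3.1}. The sharp form --- constant $1$ in the Lebesgue case --- is what \cite{Ifro} supplies, and I would invoke it to close the equivalence exactly as in the proof of Corollary~\ref{corollary:2.1}; once $(1)\Leftrightarrow(2)$ is in hand, the chain $(1)\Leftrightarrow(2)$, $(2)\Rightarrow(3)$, $(3)\Rightarrow(1)$ gives the three-way equivalence.
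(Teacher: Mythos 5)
Your overall route coincides with the paper's: the authors also obtain $(2)\Leftrightarrow(3)$ by specializing $\Phi(t)=t^{p}$, $\Phi_i(t)=t^{p_i}$ in Theorem \ref{theorem:3.1}, and they simply cite \cite{Ifro} for $(1)\Leftrightarrow(2)$. But the difficulty you flagged in your last paragraph is not a cosmetic matter of constants: the implication $(1)\Rightarrow(2)$ \emph{with constant exactly $1$}, as statement $(2)$ is written, is false for the quasi-norm used in this paper, so neither your union-bound argument (which only yields the constant $p^{-1/p}\prod_{i=1}^{m}p_i^{1/p_i}$, e.g.\ $2$ for $m=2$, $p_1=p_2=2$, $p=1$) nor an appeal to \cite{Ifro} can close the gap. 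To make your own objection precise, note that functions with genuinely disjoint supports have product zero and show nothing; what works is a pair of two-level functions with interchanged values. Take disjoint measurable sets $A_1,A_2\subseteq\mathbb{R}^n$ with $|A_1|=|A_2|=1$ and put
$$f=\chi_{A_1}+\tfrac{1}{\sqrt{2}}\,\chi_{A_2},\qquad g=\tfrac{1}{\sqrt{2}}\,\chi_{A_1}+\chi_{A_2}.$$
A direct computation with the definition of $\|\cdot\|_{wL_\Phi(\mathbb{R}^n)}$ for $\Phi(t)=t^2$ gives $\|f\|_{wL^{2}(\mathbb{R}^n)}=\|g\|_{wL^{2}(\mathbb{R}^n)}=1$, while $fg=\tfrac{1}{\sqrt{2}}\chi_{A_1\cup A_2}$ satisfies $\|fg\|_{wL^{1}(\mathbb{R}^n)}=\sqrt{2}>1$. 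Hence $(2)$ must be restated with a constant $M=M(p_1,\dots,p_m)\ge 1$ on the right-hand side, exactly as in Theorem \ref{theorem:3.1}(3). With that emendation your argument is complete and self-contained: $(1)\Rightarrow(2)$ by your optimized union bound (or by Theorem \ref{theorem:3.1}), $(2)\Rightarrow(3)$ trivially, and $(3)\Rightarrow(1)$ by letting $s\to 0^{+}$ and $s\to\infty$ in $s^{\sum_{i}1/p_i}\le M s^{1/p}$. You should not, however, lean on \cite{Ifro} for a ``sharp constant $1$'' in the weak case while simultaneously exhibiting a counterexample to it; the constant $1$ is correct only in the strong Lebesgue setting of Corollary \ref{corollary:2.1}. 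The paper's own one-line proof does not address this point.
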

\begin{proof}
	The proof that  $(1)$ and $(2)$ can be found in \cite{Ifro}. Next, by setting $\Phi(t) =t^p$ and $\Phi_i(t)=t^{p_i}$  for $ 1\leq p, p_i < \infty$ in the Theorem \ref{theorem:3.1}, we have $ (2)$ and $(3)$ are equivalent.
\end{proof}


\section {Concludings Remarks}

We have shown sufficient and necessary conditions for the generalized H\"{o}lder's inequality in Orlicz spaces and in weak Orlicz spaces.
From Theorems \ref{theorem:2.1} and \ref{theorem:3.1}, we see that both generalized H\"older's inequality
in Orlicz spaces and in weak Orlicz spaces are equivalent to the same condition, namely
$\prod\limits_{i=1}^m \Phi_{i}^{-1}(t) \leq C\Phi^{-1}(t)$.


%
%



\end{document}